\numberwithin{equation}{section}
\theoremstyle{plain}
\newtheorem{theorem}{Theorem}[section]
\newtheorem{lemma}[theorem]{Lemma}
\newtheorem{definition}[theorem]{Definition}
\theoremstyle{remark}
\newtheorem*{remark}{Remark}
\def\nn{\nonumber}
\def\RZ{\mathbb{R}^{\infty}_{+,\mathbb{Z}}}
\def\dZ{d_{\infty,\mathbb{Z}}}
\newcommand{\bbZ}{\mathbb{Z}}
\newcommand{\R}{\mathbb{R}}
\newcommand{\bfS}{\mathbb{S}}
\newcommand{\bfM}{\mathbb{M}}
\newcommand{\bfC}{\mathbb{C}}
\newcommand{\bfO}{\mathbb{O}}
\newcommand{\bfX}{\mathbf{X}}
\newcommand{\bfZ}{\mathbf{Z}}
\newcommand{\bfT}{\mathbf{T}}
\newcommand{\bfTm}{\mathbf{T}^{m}}
\newcommand{\bfTinf}{\mathbf{T}^{\infty}}
\newcommand{\bfF}{\mathbb{F}}
\newcommand{\bfG}{\mathbb{G}}
\newcommand{\bfz}{\mathbf{z}}
\newcommand{\zero}{\mathbf{0}_{\infty}}
\newcommand{\C}{\mathscr{C}}
\newcommand{\scrC}{\mathscr{C}}
\newcommand{\scrS}{\mathscr{S}}
\newcommand{\M}{\mathbb{M}}
\newcommand{\MO}{\mathbb{M}_{\bfO}}
\newcommand{\ud}{\mathrm{d}}
\newcommand{\vep}{\varepsilon}
\newcommand{\bep}{\boldsymbol{\epsilon}}
\newcommand{\mutj}{\mu_t^{(j)}}
\newcommand{\muj}{\mu^{(j)}}
\newcommand{\mutz}{\mu_t^{(0)}}
\newcommand{\muz}{\mu^{(0)}}
\title[Hidden regular variation of moving average processes]{Hidden regular variation of moving average processes with heavy-tailed innovations}
\subjclass[2010]{28A33,60G70,37M10)}
\keywords{regular variation, multivariate heavy tails, hidden regular variation, moving average processes}
\author[Resnick ]{Sidney I. Resnick}
\address{Sidney I. Resnick\\School of ORIE, Cornell University,
Ithaca, NY 14853} \email{sir1@cornell.edu}
\author[Roy ]{Joyjit Roy}
\address{Joyjit Roy\\School of ORIE, Cornell University,
Ithaca, NY 14853} \email{jr653@cornell.edu}
\thanks{S. Resnick and J. Roy were supported by Army MURI grant W911NF-12-1-0385 to Cornell University.} 
\begin{document}
\begin{abstract}
We look at joint regular variation properties of MA($\infty$)
processes of the form $\mathbf{X} = (X_k, k \in \bbZ)$ where $X_k =
\sum_{j=0}^{\infty} \psi_j Z_{k-j}$ and the sequence of random
variables $(Z_i, i \in \bbZ)$ are i.i.d.~ with regularly varying
tails. We use the setup of $\mathbb{M}_{\mathbb{O}}$-convergence and
obtain hidden regular variation properties for $\mathbf{X}$ under
suitable summabality conditions on the constant coefficents $( \psi_j
: j \geq 0 )$. Our approach emphasizes continuity properties of
mappings and produces regular variation in sequence space. 
\end{abstract}

\maketitle

\section{Introduction}

The purpose of this paper is to obtain joint regular variation properties of moving average processes of the form
\begin{align*}
 X_k =  \sum_{j=0}^{\infty} \psi_j Z_{k-j}, \;k \in \bbZ,
\end{align*}
where $Z_i$ are i.i.d.~ nonnegative heavy-tailed random variables and
$\psi_j$ are constant nonnegative coefficients. The study of tail
behavior of such processes has a long history. Early studies of the one-dimensional
case with constant coefficients are
\cite{rootzen:1978, cline:1983a, davis:resnick:1985, rootzen:1986}; see
also accounts in \cite{resnickbook:2008, brockwell:davis:1991}.
The $d$-dimensional results as well as results for moving
average processes with random coefficients can be found in
\cite{resnick:willekens:1990, hult:samorodnitsky:2008,
  mikosch:samorodnitsky:2000}. Joint regular variation properties of
the MA($\infty$) process were obtained in
\cite{davis:resnick:1985}. Many of these studies emphasized
 finding proper summability conditions for the coefficient sequence
which forces the extremal properties of the process to be determined
by the tail behavior of the innovation sequence. In this paper we use
a fairly strong summabilty assumption on the coefficent sequence and
concentrate on using continuity arguments to obtain joint regular variation properties of
the entire sequence as a random element of the space of double-sided
sequences. 

Traditionally multivariate regular variation properties of $d$-dimensional random vectors have been expressed using the theory of vague convergence where limit measures are finite on compact sets. To make extremal sets, such as sets which are unbounded above, compact, the approach is to compactify a locally compact space such as $[0,\infty)^2$ by adding lines through $\infty$ to obtain $[0,\infty]^2$ and then restrict the class of sets on which the limit measure has to be finite by removing a point such as $(0,0)$ to obtain $[0,\infty]^2 \setminus \{(0,0)\}$. For more details on vague convergence see \cite{kallenberg:1983} while look at \cite{resnick:1987, resnick:2007} for applications of vague convergence to extreme value theory. There are a few systemic problems inherent to using vague convergence theory such as, having to worry about lines through $\infty$ as well as points of uncompactification when dealing with continuous mapping arguments. Further the theory is limited to locally compact spaces.

An alternate framework for dealing with tail behavior of general random
elements, the theory of $\bfM_{\bfO}$-convergence was developed in
\cite{hult:lindskog:2006} which applies the theory of
$w_{\#}$-convergence (\cite{daley:vere-jones:vol1:2007}) to obtain
a framework which lends itself nicely to dealing with regular
variation on any complete, separable metric space with a point
removed. The theory was further extended to allow consideration of
spaces with a general closed cone removed in
\cite{lindskog:resnick:2013}. The main attraction of any such theory
lies in powerful mapping theorems and their use to obtain results
about transformations and functionals (see \cite{hult:lindskog:2005,
  hult:lindskog:2007}). In this paper we prove that $\mathbf{X} =
(X_k, k \in \bbZ)$ is regularly varying as an element of $\RZ
\setminus \{ \zero \}$ using such a mapping argument. 

Another aspect of multivariate regular variation that is relevant to our paper is the concept of hidden regular variation which was first developed in \cite{resnick:2002, maulik:resnick:2004}. As a simple example, we can look at 2 concurrent regular variation properties of an i.i.d.~ Pareto(1) pair of random variables $(X_1,X_2)$. Observe that for $x_1,x_2 \geq 0$,
\begin{align*}
tP[(X_1 > tx_1,X_2 > tx_2] \to \left\{ \begin{array}{clc} (x_1 \vee x_2)^{-1} &\text{ if } &x_1 \wedge x_2 = 0 \\ 0 &\text{ o.w. } &  \end{array} \right.
\end{align*}
while 
\begin{align*}
tP[(X_1 > t^{1/2}x_1,X_2 > t^{1/2}x_2] \to  (x_1x_2) ^{-1} \text{ if } &x_1 \wedge x_2 > 0 .
\end{align*}
Here the second regular variation property of the pair $(X_1,X_2)$ is
only applicable on a part of the state space obtained by removing the
support of the limit measure in the first regular variation property
and then using a scaling function that goes to $\infty$ slower than
$t$. The second property was hidden by the coarse
scaling used to obtain convergence to a non-zero measure in the first
case. The theory of $\bfM_{\bfO}$-convergence has already been
fruitfully applied to prove the existence of hidden regular variation
(\cite{das:mitra:2013}). In this paper we obtain an infinite
sequence of hidden regular variation properties for the finite moving
average process as an element of $\RZ$.  

In Section 2, we define $\bfM_{\bfO}$-convergence and collect relevant
results about the theory as well as the definition of regular
variation of a random variable in this framework. In Section 3.1, we
restate results about regular variation of i.i.d.~ heavy-tailed
sequences obtained in \cite{lindskog:resnick:2013} which will form the
basis for proving our results. In Section 3.3 we prove the existence
of hidden regular variation for the MA($m$) process before proving our
main theorem in Section 3.4. Due to technical considerations, proving
a hidden regular variation property for the MA($\infty$) sequence has
not yet been possible but the authors are working towards achieving
that end; we instead prove hidden regular variation for finite order
moving averages. 
Still our main result about the joint regular variation of
the entire sequence not only serves as a nice demonstration of the
power of continuous mapping theorems in the $\bfM_{\bfO}$ framework
but will serve as a building block for obtaining further results
through the use of other mappings and functionals to the sequence
space. 

\section{Basics of $\mathbb{M}_{\mathbb{O}}$-convergence and regular variation of measures}

In this section we define the framework for $\mathbb{M}_{\mathbb{O}}$-convergence and collect basic results that will be useful later. For more details and proofs see Sections 2 and 3 in \cite{lindskog:resnick:2013}.

\subsection{$\MO$-convergence}

Let $(\mathbb{S}, d)$ be a complete separable metric space. Fix a closed set $\bfC \subset \bfS$ and set $\bfO=\bfS\setminus
\bfC$,  The subspace $\bfO$ is a metric subspace of $\bfS$ in the relative topology with $\sigma$-algebra {$\scrS_{\bfO}=\scrS(\bfO)=\{A : A \subset \bfO, A \in \scrS\}$}. 

Let $\C_b$ denote the class of real-valued, non-negative, bounded and continuous functions on $\bfS$, and let $\M_b$ denote the class of finite Borel
measures on $\scrS$. A basic neighborhood of $\mu \in \M_b$ is a set of the form $\{\nu \in \M_b :|\int f_i \; \ud\nu -\int f_i \; \ud\mu| < \vep, i = 1,\dots,k\}$, where $\vep > 0$ and $f_i \in \C_b$ for $i=1,\dots,k$. This equips $\M_b$ with the weak topology and convergence $\mu_n \to \mu$ in $\M_b$ means  $\int f \ud \mu_n \to \int f \ud \mu$ for all $f \in \C_b$. See, for example, Sections 2 and 6 in \cite{billingsley:1999} for details. 

Let $\C(\bfO)$ denote the real-valued, non-negative, bounded and continuous functions $f$ on $\bfO$ such that for each $f$ there exists $r>0$ such that $f$ vanishes on $\bfC^r$; we use the notation $\bfC^r=\{x\in \bfS:d(x,\bfC)<r\}$, where $d(x,\bfC)=\inf_{y\in \bfC}d(x,y)$. Similarly, we will write $d(A,\bfC)=\inf_{x \in A,\,y\in \bfC}d(x,y)$ for $A\subset\mathbb{S}$. We say that a set $A\in\scrS_{\bfO}$ is bounded away from $\bfC$ if $A \subset \bfS \setminus\bfC^r$ for some $r > 0$ or equivalently $d(A,\bfC)>0$. So $\C(\bfO)$ consists of non-negative continuous functions whose supports are bounded away from $\bfC$. Let $\MO$ be the class of Borel measures on $\bfO$ whose restriction
to $\bfS \setminus\bfC^r$ is finite for each $r > 0$. When convenient, we also write $\mathbb{M}(\mathbb{O})$ or $\mathbb{M}(\mathbb{S}\setminus \bfC)$.  A basic neighborhood of $\mu \in \MO$ is a set of the form $\{\nu \in \MO :|\int f_i \ud \nu - \int f_i \ud \mu| < \vep, \,i = 1,\dots,k\}$, where
$\vep > 0$ and $f_i \in \C_{\bfO}$ for $i=1,\dots,k$. Convergence $\mu_n\to\mu$ in $\MO$ is convergence in the topology defined by this base. As the next theorem shows, it actually suffices to consider the class of uniformly continuous functions in $\C(\bfO)$.

\begin{theorem}\label{portthm}
Let $\mu,\mu_n \in \MO$.Then the following statements are equivalent.
\\
(i) $\mu_n \to \mu$ in $\MO$ as $n\to\infty$.
\\
(ii)  $\int f \ud\mu_n \to \int f \ud \mu$ for each $f \in \C(\bfO)$ which is also uniformly continuous on $\bfS$. 
\\
{(iii)} $\mu_n^{(r)}\to\mu^{(r)}$ in $\M_b(\bfS \setminus \bfC^r)$ for all $r>0$ such that $\mu(\partial \, \bfS \setminus \bfC^r) = 0$  where $\mu^{(r)}$ denote the restriction of $\mu$ to $\bfS \setminus \bfC^r$.
\\
\end{theorem}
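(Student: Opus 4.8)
The plan is to prove the cyclic chain of implications $(i)\Rightarrow(ii)\Rightarrow(iii)\Rightarrow(i)$, adapting the classical portmanteau argument while keeping careful track of the requirement that supports be bounded away from $\bfC$. The implication $(i)\Rightarrow(ii)$ is immediate: by construction of the topology on $\MO$, $(i)$ is equivalent to $\int f\,\ud\mu_n\to\int f\,\ud\mu$ for \emph{every} $f\in\C(\bfO)$, and the functions appearing in $(ii)$ form a subclass of $\C(\bfO)$.

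For $(ii)\Rightarrow(iii)$, fix $r>0$ with $\mu(\partial(\bfS\setminus\bfC^r))=0$ and set $E_r:=\bfS\setminus\bfC^r=\{x\in\bfS:d(x,\bfC)\ge r\}$, a closed and hence complete separable metric subspace of $\bfS$ on which $\mu^{(r)}$ and every $\mu_n^{(r)}$ are finite. I would verify the two hypotheses of the portmanteau theorem for weak convergence of finite measures on $E_r$ (cf.\ \cite{billingsley:1999}): that $\limsup_n\mu_n^{(r)}(F)\le\mu^{(r)}(F)$ for every closed $F\subset E_r$, and that $\mu_n^{(r)}(E_r)\to\mu^{(r)}(E_r)$. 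For the first, such an $F$ is closed in $\bfS$ and contained in $\{d(\cdot,\bfC)\ge r\}$, and for $0<\delta<r$ the Lipschitz function $h_\delta(x):=(1-\delta^{-1}d(x,F))^+$ satisfies $\mathbbm{1}_F\le h_\delta\le\mathbbm{1}_{F^\delta}$ with $F^\delta:=\{x:d(x,F)<\delta\}$, is bounded, and vanishes on $\bfC^{r-\delta}$ (if $d(x,\bfC)<r-\delta$ then $d(x,y)\ge d(y,\bfC)-d(x,\bfC)>\delta$ for each $y\in F$); thus $h_\delta\in\C(\bfO)$ is uniformly continuous, $(ii)$ gives $\limsup_n\mu_n(F)\le\int h_\delta\,\ud\mu\le\mu(F^\delta)$, and letting $\delta\downarrow0$ — the sets $F^\delta$ are bounded away from $\bfC$, hence of finite $\mu$-mass, and decrease to $\overline{F}=F$ — yields $\limsup_n\mu_n(F)\le\mu(F)$ by continuity from above. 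For the mass statement, the same bump-function idea applied from inside, with Lipschitz functions increasing to $\mathbbm{1}_{\mathrm{int}\,E_r}$ (all in $\C(\bfO)$), gives via $(ii)$ and monotone convergence $\liminf_n\mu_n(E_r)\ge\liminf_n\mu_n(\mathrm{int}\,E_r)\ge\mu(\mathrm{int}\,E_r)$; combined with $\limsup_n\mu_n(E_r)\le\mu(E_r)$ (the closed-set bound at $F=E_r$) and $\mu(\mathrm{int}\,E_r)=\mu(E_r)$ — exactly the place where $\mu(\partial(\bfS\setminus\bfC^r))=0$ enters — this forces $\mu_n(E_r)\to\mu(E_r)$.

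For $(iii)\Rightarrow(i)$, it is enough to prove $\int f\,\ud\mu_n\to\int f\,\ud\mu$ for an arbitrary $f\in\C(\bfO)$; fix $r_0>0$ with $f$ vanishing on $\bfC^{r_0}$. Since $\partial(\bfS\setminus\bfC^r)\subset\{x:d(x,\bfC)=r\}$, these sets are pairwise disjoint as $r$ varies and, for $r$ confined to any $[\varepsilon,r_0]$ with $\varepsilon>0$, are jointly of finite $\mu$-mass; hence only countably many $r\in(0,r_0]$ violate $\mu(\partial(\bfS\setminus\bfC^r))=0$, and I may choose one $r\in(0,r_0]$ for which it holds. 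As $\bfC^r\subseteq\bfC^{r_0}$, $f$ vanishes on $\bfC^r$, so its restriction to $E_r$ is a bounded continuous function there and $\int f\,\ud\mu_n=\int_{E_r}f\,\ud\mu_n^{(r)}$ (and likewise for $\mu$); then $(iii)$ gives $\int_{E_r}f\,\ud\mu_n^{(r)}\to\int_{E_r}f\,\ud\mu^{(r)}$, as required.

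I expect the main obstacle to lie in $(ii)\Rightarrow(iii)$: one must manufacture, for each closed (resp.\ open) subset of $E_r$, an approximating Lipschitz — hence uniformly continuous — function whose support remains bounded away from $\bfC$ so that $(ii)$ applies, and one must separately pin down convergence of the total masses, which is precisely where the hypothesis $\mu(\partial(\bfS\setminus\bfC^r))=0$ is indispensable. The remaining steps are routine adaptations of the classical portmanteau arguments.
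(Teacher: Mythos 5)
Your proof is correct, and the paper itself gives no proof of this theorem—it is quoted from \cite{lindskog:resnick:2013} (Theorem 2.1 there), whose argument is essentially the same portmanteau adaptation you carry out: Lipschitz bump functions vanishing on a $\bfC^{r'}$ to get the closed-set bound and total-mass convergence for $(ii)\Rightarrow(iii)$, and the observation that the spheres $\{d(\cdot,\bfC)=r\}$ are disjoint so that all but countably many $r$ are $\mu$-continuity radii for $(iii)\Rightarrow(i)$. Nothing in your chain $(i)\Rightarrow(ii)\Rightarrow(iii)\Rightarrow(i)$ is missing or flawed.
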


Continuous mapping theorems will play an important role in extending the regular variation property of the innovation sequence to that of the actual moving average sequence. Here we state one version that will prove useful to us. Consider another separable and complete metric space $\bfS'$ and let $\bfO',\scrS_{\bfO'},\bfC',\M_{\bfO'}$ have the same meaning relative to the space $\bfS'$ as do $\bfO,\scrS_{\bfO},\bfC,\M_{\bfO}$ relative to $\bfS$.
\begin{theorem}\label{mappingtheorem}
Suppose $h:\bfS \mapsto \bfS'$ is uniformly continuous and $\bfC':=h(\bfC)$ is closed in $\bfS'$. Then $\hat h: \bfM_{\bfO} \mapsto \bfM_{\bfO'} $ defined by $\hat h(\mu)=\mu {\circ } h^{-1}$ is continuous. 
\end{theorem}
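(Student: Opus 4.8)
The plan is to prove this in two steps — first that $\hat h$ actually maps $\bfM_{\bfO}$ into $\bfM_{\bfO'}$, and then that it is sequentially continuous, the second reducing via Theorem~\ref{portthm}(ii) to a change-of-variables identity. Everything will hinge on one lemma, which I would isolate first: \emph{if $B'\subset\bfS'$ is bounded away from $\bfC'$, then $h^{-1}(B')$ is bounded away from $\bfC$.} To see it, write $d'$ for the metric of $\bfS'$, set $r'=d'(B',\bfC')>0$, use uniform continuity of $h$ to get $\delta>0$ with $d(x,y)<\delta\Rightarrow d'(h(x),h(y))<r'$, and argue by contradiction: if some $x\in h^{-1}(B')$ had $d(x,\bfC)<\delta$, pick $c\in\bfC$ with $d(x,c)<\delta$, giving $d'(h(x),h(c))<r'$ with $h(x)\in B'$ and $h(c)\in h(\bfC)=\bfC'$, against $d'(B',\bfC')=r'$; hence $d(h^{-1}(B'),\bfC)\ge\delta$. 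I would note here that this is also where the closedness of $\bfC'=h(\bfC)$ enters — without it $\bfM_{\bfO'}$ would not even be defined.

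For the first step, fix $\mu\in\bfM_{\bfO}$. Since $h(\bfC)=\bfC'$, any Borel $A'\subset\bfO'$ has $h^{-1}(A')\cap\bfC=\emptyset$, so $h^{-1}(A')$ is a Borel subset of $\bfO$ (continuity of $h$) and $(\mu\circ h^{-1})(A'):=\mu(h^{-1}(A'))$ defines a Borel measure on $\bfO'$. For each $r'>0$ the set $\bfS'\setminus(\bfC')^{r'}$ is bounded away from $\bfC'$, so by the lemma its preimage lies inside $\bfS\setminus\bfC^{\delta}$ for some $\delta>0$ and hence has finite $\mu$-measure; thus $\hat h(\mu)$ is finite on $\bfS'\setminus(\bfC')^{r'}$, i.e. $\hat h(\mu)\in\bfM_{\bfO'}$.

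For continuity, suppose $\mu_n\to\mu$ in $\bfM_{\bfO}$. By Theorem~\ref{portthm}(ii) it is enough to show $\int f\,\ud(\mu_n\circ h^{-1})\to\int f\,\ud(\mu\circ h^{-1})$ for every $f\in\C(\bfO')$ that is uniformly continuous on $\bfS'$; extending such an $f$ by $0$ on $\bfC'$ makes it a non-negative, bounded, uniformly continuous function on $\bfS'$ that vanishes on $(\bfC')^{r'}$ for some $r'>0$. I would then invoke the pushforward change-of-variables identity $\int f\,\ud(\nu\circ h^{-1})=\int(f\circ h)\,\ud\nu$ (valid for $\nu\in\bfM_{\bfO}$) to reduce the claim to $\int(f\circ h)\,\ud\mu_n\to\int(f\circ h)\,\ud\mu$, and apply Theorem~\ref{portthm}(ii) in $\bfS$ — which requires $f\circ h\in\C(\bfO)$ and $f\circ h$ uniformly continuous on $\bfS$. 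The latter is clear (composition of uniformly continuous maps); $f\circ h$ is non-negative and bounded because $f$ is; and picking $\delta>0$ from the lemma so that $d(x,\bfC)<\delta\Rightarrow d'(h(x),\bfC')<r'$ shows $f\circ h$ vanishes on $\bfC^{\delta}$. Hence $f\circ h\in\C(\bfO)$ and the desired convergence follows.

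The main obstacle is really the lemma: ordinary continuity of $h$ would not keep the preimage of a set bounded away from $\bfC'$ bounded away from $\bfC$, and it is precisely the uniform-continuity hypothesis that repairs this, doing double duty as the source of well-definedness in the first step and of the support condition on $f\circ h$ in the second. Given the lemma, the change of variables and the verification that $f\circ h\in\C(\bfO)$ are entirely routine.
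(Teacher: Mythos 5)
Your proof is correct, and it follows essentially the same route as the proof the paper defers to in \cite{lindskog:resnick:2013}: reduce to the uniformly continuous test functions of Theorem~\ref{portthm}(ii), pull back via the change of variables $\int f\,\ud(\nu\circ h^{-1})=\int (f\circ h)\,\ud\nu$, and use uniform continuity of $h$ to check that $f\circ h$ vanishes on a $\delta$-neighborhood of $\bfC$, hence lies in $\C(\bfO)$. Your isolated lemma (preimages of sets bounded away from $\bfC'$ are bounded away from $\bfC$) is exactly the mechanism used there as well, so there is nothing to add.
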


\subsection{Regular variation of measures}
The usual notion of regular variation involves comparisons along a ray and requires a concept of scaling or multiplication. Given any real number $\lambda>0$  and any $x\in \bfS$, we assume there exists a mapping $(\lambda,x)\mapsto \lambda x$ from $(0,\infty) \times \bfS$ into $\bfS$ 
satisfying:
\begin{itemize}
\item the mapping $(\lambda,x)\mapsto \lambda x$ is continuous,
\item $1x=x$ and $\lambda_1(\lambda_2 x)=(\lambda_1\lambda_2)x$.
\end{itemize}
These two assumptions allow definition of a cone $\bfC \subset \mathbb{S}$ as a set satisfying $x\in \bfC$ implies $\lambda x \in \bfC$ for any $\lambda>0$. For this section, fix a closed cone $\bfC \subset \bfS$
and then $\bfO:=\bfS\setminus \bfC$ is an open cone. Also assume that 
\begin{itemize}
\item $d(x,\bfC) < d(\lambda x,\bfC)$ if $\lambda>1$ and $x\in\bfO$.
\end{itemize}

Recall from, for example, \cite{bingham:goldie:1987} that a positive measurable function $c$ defined on $(0,\infty)$ is regularly varying
with index $\rho \in \R$ if $\lim_{t\to\infty}c(\lambda t)/c(t)=\lambda^{\rho}$ for all $\lambda > 0$. Similarly, a sequence $\{c_n\}_{n\geq 1}$ of positive numbers is regularly varying with index $\rho \in \R$ if $\lim_{n\to\infty}c_{[\lambda  n]}/c_n=\lambda^{\rho}$ for all $\lambda > 0$.  Here $[\lambda n]$ denotes the integer part of $\lambda n$.  

\begin{definition}\label{rvdefseq}
A sequence $\{\nu_n\}_{n \geq 1}$ in  $\MO$ is regularly varying 
if there exists an increasing sequence $\{c_n\}_{n \geq 1}$ of positive numbers which is regularly varying and a nonzero $\mu \in \MO$ such that $c_n\nu_n \to \mu$ in $\MO$ as $n\to\infty$.
\end{definition}

We now define regular variation for a single measure in $\MO$ as well as an equivalent formulation that is more pleasing to handle algebraically.

\begin{definition}\label{rvdefmeas}
A measure $\nu\in\MO$ is regularly varying if the sequence $\{\nu(n\cdot)\}_{n\geq 1}$ in $\MO$ is regularly varying  or equivalently there exist a nonzero $\mu \in \MO$ and an increasing function $b$ of such that $t\nu(b(t)\cdot)\to\mu(\cdot)$ in $\MO$ as $t\to\infty$. Similarly we say that a random variable $\mathbf{Y}$ taking values in $\bfS$ is regularly varying if the associated probability measure is regularly varying, ie if $P[\mathbf{Y} \in \cdot]$ is regularly varying. 
\end{definition}
We will refer to the function $b$ as the scaling function correspoding to the regularly varying measure $\nu$ on $\bfM_{\bfO}$.
\section{Main results}

\subsection{Hidden regular variation for i.i.d.~ heavy tailed sequences}

From here on we will look at $\mathbb{S} = \RZ$ where $\RZ$ is defined to be the space of all double-sided sequences of non-negative real numbers ie $\RZ = \{\mathbf{x} = (x_i, i \in \mathbb{Z}) : x_i \geq 0 \}$ equipped with the metric $\dZ$ defined as 
\begin{align}\label{dinftydef}
\dZ(\mathbf{x},\mathbf{y}) = \sum\limits_{i=-\infty}^{\infty} \frac{ |x_i - y_i | \wedge 1}{2^{|i| + 1}}.
\end{align}
The concept of multiplication will be given by the standard pointwise multiplication of a sequence by a real number. 

Observe that convergence in this metric is equivalent to convergence of all finite dimensional sequences, i.e.~ $\dZ(\mathbf{x^n},\mathbf{x}) \to 0$, if and only if for any $M \in \mathbb{Z}_+$, the sequences $(x_i^n, |i| \leq M)$ converge pointwise to $(x_i, |i| \leq M)$ in $\mathbb{R}^{2M+1}$. Further observe that $(\RZ,\dZ)$ as a metric space is homeomorphic to $(\mathbb{R}_+^{\infty},d_{\infty})$ where $\mathbb{R}_+^{\infty} = \{\mathbf{x} = (x_i, i \in \mathbb{N}) : x_i \geq 1 \}$ and $d_{\infty}(\mathbf{x},\mathbf{y}) = \sum_{i=1}^{\infty} \frac{ |x_i - y_i | \wedge 1}{2^i}$.

Define $\mathbf{0}_{\infty}$ to be the sequence with all components 0 in $\RZ$ and $\mathbf{e}_i$ to be the sequence in $\RZ$ with the $i$'th component 1 and all other components 0. Further define
\begin{equation}\label{cjdef}
\begin{aligned}
\mathbb{C}_{=j} &= \{ \mathbf{x} \in \RZ : \sum\limits_{i=-\infty}^{\infty} \bep_{x_i} ((0,\infty)) = j \} \text{ for all } j \geq 1, \text { and }\\
\mathbb{C}_{\leq j} &= \{ \mathbf{x} \in \RZ : \sum\limits_{i=-\infty}^{\infty} \bep_{x_i} ((0,\infty)) \leq j \} \text{ for all } j \geq 0.
\end{aligned}
\end{equation}
Define $\mathbb{O}_j = \RZ \setminus \mathbb{C}_{\leq j-1}$ for $j \geq 1$. 

Let $\mathbf{Z} = (Z_i, i\in \mathbb{Z})$ be i.i.d.~ random variables in $\mathbb{R}_+$ with regularly varying tails with index $\alpha > 0$ ie 
\begin{align}\label{iidseqdef1}
\lim_{t \to  \infty}\frac{ P[Z_0 > tz]}{P[Z_0 >t]}= z^{-\alpha} \text{ for all } z>0.
\end{align}
or equivalently for some regularly varying sequence $b(\cdot)$,  
\begin{align}\label{iidseqdef2}
\lim_{t \to  \infty} tP[Z_0 > b(t)z] = z^{-\alpha} \text{ for all } z>0.
\end{align}
With this setup, we can restate Theorem 4.2 in \cite{lindskog:resnick:2013} as a statement about the space $\RZ$ and the sequence of i.i.d.~ random variables $\mathbf{Z} \in \RZ$. Define for each $j \geq 0$,
\begin{equation}\label{mujdef}
\begin{aligned}
&\mutj (\cdot) = tP(\bfZ/b(t^{1/(j+1)}) \in \cdot) \text{ and } \\
&\muj (\cdot) = \sum_{(i_1,\dots,i_{j+1})}\int \mathbbm{1} \left\{\sum_{k=1}^{j+1}z_{k} \mathbf{e}_{i_k}\in \cdot \right\} \nu_\alpha (\ud z_1)\dots \nu_\alpha (\ud z_{j+1}),
\end{aligned}
\end{equation}
where $\nu_\alpha(x,\infty) = x^{-\alpha}$ and the indices $(i_1,\dots,i_{j+1})$ run through the ordered subsets of size $j+1$ of $\bbZ$.
\begin{theorem}\label{rinftyconvergence}
For every $j \geq 0$, $\mutj \to \muj$ in $\mathbb{M}(\mathbb{O}_j)$. The measure $\muj$ concentrates on $\mathbb{C}_{\leq j+1} \setminus \mathbb{C}_{\leq j} = \mathbb{C}_{= j+1}$ and has the alternative form
\begin{align}\label{mualtdef}
& \muj(\ldots, \ud z_1, \ud z_0, \ud z_{-1}, \ldots) \\
&\qquad=  \sum_{(i_1,\dots,i_{j+1})}  \left( \prod_{k \notin \{i_1,\dots,i_{j+1}\} }\bep_0 (\ud z_k) \right)    \left( \prod_{k \in \{i_1,\dots,i_{j+1}\} }\nu_{\alpha} (\ud z_k) \right). \nn
\end{align}
\end{theorem}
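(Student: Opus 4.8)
The plan is to obtain the convergence from the corresponding statement for one-sided sequences, Theorem~4.2 of \cite{lindskog:resnick:2013}, by transporting it to $\RZ$ along a reindexing homeomorphism, and then to read the concentration property and the explicit form \eqref{mualtdef} off the definition \eqref{mujdef} of $\muj$.

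First I would fix a bijection $\sigma\colon\bbN\to\bbZ$ and let $T$ be the induced coordinate relabelling $(T\mathbf{x})_n=x_{\sigma(n)}$, mapping $\RZ$ onto the one-sided nonnegative sequence space on which Theorem~4.2 of \cite{lindskog:resnick:2013} is stated. The verifications are routine: $T$ is a bijection intertwining the two scalar multiplications, since pointwise multiplication by a scalar commutes with relabelling; $T$ and $T^{-1}$ are uniformly continuous, which one sees by fixing $\vep>0$, discarding the tail of the target metric beyond a large index, and controlling each of the finitely many remaining coordinate differences via the fact that $\dZ(\mathbf{x},\mathbf{y})<\delta$ forces $|x_i-y_i|\wedge 1<\delta\,2^{|i|+1}$; and $T$ carries each cone $\mathbb{C}_{\le k}$, hence also $\mathbb{C}_{=k}$ and $\mathbb{O}_j$, onto its one-sided counterpart, because membership in these sets depends only on the number of strictly positive coordinates, which is unchanged by relabelling. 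Applying Theorem~\ref{mappingtheorem} to $T$ and to $T^{-1}$ then shows that $\mathbb{M}(\mathbb{O}_j)$-convergence on $\RZ$ is equivalent to the corresponding $\mathbb{M}_{\bfO}$-convergence on the one-sided space; since $T$ also pushes $\mutj$ and $\muj$ forward to the analogous objects built from $\bfZ$ reindexed by $\sigma$, the assertion $\mutj\to\muj$ in $\mathbb{M}(\mathbb{O}_j)$ is exactly Theorem~4.2 of \cite{lindskog:resnick:2013} after this change of index set. (Should one wish to avoid importing that theorem as a black box, the same conclusion can be assembled from the classical multivariate hidden regular variation of the i.i.d.\ vectors $(Z_{-m},\dots,Z_m)$, lifted to $\RZ$ via the criterion in Theorem~\ref{portthm}(iii) and the continuity of the truncation maps---but that lifting is precisely the content of the cited theorem.)

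For the remaining two claims I would argue directly from \eqref{mujdef}. Each summand of $\muj$ is the image of $\nu_\alpha^{\otimes(j+1)}$ under $(z_1,\dots,z_{j+1})\mapsto\sum_{k=1}^{j+1}z_k\mathbf{e}_{i_k}$; since $\nu_\alpha$ is carried by $(0,\infty)$, for $\nu_\alpha^{\otimes(j+1)}$-almost every tuple all of the $z_k$ are strictly positive, so the image has strictly positive entries at exactly the $j+1$ positions $i_1,\dots,i_{j+1}$ and hence lies in $\mathbb{C}_{=j+1}=\mathbb{C}_{\le j+1}\setminus\mathbb{C}_{\le j}$; summing over index families keeps the measure on $\mathbb{C}_{=j+1}$. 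Viewing $\RZ$ as the product $\prod_{k\in\bbZ}\Rplus$, that image measure is just the product measure putting $\bep_0$ on every coordinate outside $\{i_1,\dots,i_{j+1}\}$ and $\nu_\alpha$ on every coordinate inside it, and summing these product measures over the index families gives \eqref{mualtdef}; any combinatorial factor appears identically on both sides because $\nu_\alpha^{\otimes(j+1)}$ is exchangeable.

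The one place where a short genuine argument rather than bookkeeping is needed---and which I expect to be the main obstacle---is checking that $\muj$ actually lies in $\mathbb{M}(\mathbb{O}_j)$, i.e.\ is finite on every set $A$ bounded away from the cone removed to form $\mathbb{O}_j$, even though it is a countable sum of copies of the infinite measure $\nu_\alpha$. The geometry of $\dZ$ does the work here: if $A$ is bounded away from that cone by $r>0$, then every $\mathbf{x}\in A\cap\mathbb{C}_{=j+1}$ must satisfy $(x_{i_k}\wedge 1)/2^{|i_k|+1}>r$ for each of its $j+1$ strictly positive coordinates, and since this quantity never exceeds $2^{-|i_k|-1}$, each $|i_k|$ lies below a threshold determined by $r$, while $x_{i_k}>c_k$ for suitable $c_k>0$. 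Hence only finitely many index families $(i_1,\dots,i_{j+1})$ contribute to $\muj(A)$, and each contributes the finite amount $\prod_{k}\nu_\alpha(c_k,\infty)=\prod_{k}c_k^{-\alpha}$. The corresponding finiteness underlies the proof of Theorem~4.2 of \cite{lindskog:resnick:2013} as well.
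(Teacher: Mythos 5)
Your argument is correct and is essentially the route the paper takes: the paper gives no proof of Theorem \ref{rinftyconvergence} at all, presenting it as a direct restatement of Theorem 4.2 of \cite{lindskog:resnick:2013} via the homeomorphism between $(\RZ,\dZ)$ and $(\mathbb{R}_+^{\infty},d_{\infty})$ that it notes just before the theorem. Your proposal merely makes that transfer explicit --- uniform continuity of the coordinate relabelling in both directions, its compatibility with scalar multiplication and with the cones $\mathbb{C}_{\leq j}$, the identification of the pushed-forward measures, and the finiteness of $\muj$ on sets bounded away from $\mathbb{C}_{\leq j}$ --- and all of these checks are sound.
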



\subsection{Definition of the MA($\infty$) process and framework of proof of the main result}
Let $(\psi_j, j \geq 0)$ be a sequence of non-negative constants with 
\begin{itemize}
\item[(A1)]$\psi_0 > 0$ and
\item[(A2)] for some $\delta < \alpha \wedge 1$, $\sum_{j=0}^{\infty} \psi_j^{\delta} < \infty$.
\end{itemize}
Observe that Assumption (A2) implies the following:
\begin{itemize}
\item[(C1)] $\sum_{j=0}^{\infty} \psi_j < \infty$,
\item[(C2)] $\sum_{j=0}^{\infty} \psi_j^{\alpha} < \infty$,
\item[(C3)] for any $k \in \bbZ$, the sequence $\sum_{j=0}^{\infty} \psi_j Z_{k-j}$ converges almost surely, and
\item[(C4)] for any $x>0$ and $k \in \bbZ$,
\begin{align*}
\lim_{N \to \infty} \limsup_{t \to \infty} t P \left[\sum_{j>N} \psi_j Z_{k-j} > b(t)x \right] = 0,
\end{align*}
\end{itemize}
where $\mathbf{Z} = (Z_i, i \in \bbZ)$ is defined as in \eqref{iidseqdef2}.
(C1) and (C2) are easy to see while proofs of (C3) and (C4) can be found in \cite{resnick:1987} (Section 4.5, especially Lemma 4.24)  and \cite{cline:1983a,cline:1983b}.
\begin{remark}
Assumption (A2) is not the weakest condition known in the literature that implies (C1 - C4). See \cite{wang:tang:2006, hult:samorodnitsky:2008, resnick:willekens:1990, mikosch:samorodnitsky:2000} for different summability assumptions on the sequence $(\psi_j, j \geq 0)$ as well as a treatment of moving average processes with random coefficients and heavy tailed innovations.
\end{remark}

For $k \in \bbZ$, define
\begin{align}\label{mainftydef}
X_k = \sum_{j=0}^{\infty} \psi_j Z_{k-j}.
\end{align}
(C3) ensures that $\mathbf{X} = (X_k, k \in \bbZ)$ is a well defined
sequence of random variables in $\RZ$. If we define the map
$\mathbf{T}^{\infty} : \RZ \ni \mathbf{z} = (z_i, i \in \bbZ) \mapsto
\bfT^{\infty}(\mathbf{z}) = (T_k^{\infty}(\mathbf{z}), k \in \bbZ) \in
\RZ$ where  
\begin{align}\label{tinftydef}
T_k^{\infty}(\mathbf{z}) = \sum_{j=0}^{\infty} \psi_j z_{k-j},
\end{align}
then we have that 
\begin{align*}
\bfX = \bfT^{\infty}(\bfZ).
\end{align*}
This leads us to suspect that regular variation properties can be
obtained from Theorem \ref{rinftyconvergence} using a continuous
mapping argument. But unfortunately, the map $\bfT^{\infty}$, even
though well-defined $P$-almost surely,  is nowhere continuous on
$\RZ$. This forces us to use a truncation argument as detailed in the
sequel by using a sequence of maps which map $\bfZ$ to the partial
sums of the infinite sums that make up the elements of $\bfX$ and then
using a Slutsky-style approximation. The details are 
technical as we are dealing with infinite measures. 

\subsection{Hidden regular variation of the MA($m$) process}
For every $m \geq 0$, define the random variable  $\mathbf{X}^m = (X_k^m, k \in \bbZ) \in \RZ$ where
\begin{align}\label{xmdef}
X_k^m = \sum_{j=0}^{m} \psi_j Z_{k-j}.
\end{align}
Similar to \eqref{tinftydef} we can define for every $m \geq 0$ the map $\mathbf{T}^{m} : \RZ \ni \mathbf{z} = (z_i, i \in \bbZ) \mapsto \bfT^{m}(\mathbf{z}) = (T_k^{m}(\mathbf{z}), k \in \bbZ) \in \RZ$ where 
\begin{align}\label{tmdef}
T_k^{m}(\mathbf{z}) = \sum_{j=0}^{m} \psi_j z_{k-j}.
\end{align}
Again we have that $\bfX^m = \bfT^{m}(\bfZ)$. However the map $\bfT^m$ is well-behaved enough for us to use Theorem \ref{mappingtheorem}. We first prove two preliminary lemmas to enable the use of said theorem, which will lead to our main result about the MA($m$) processes.

\begin{lemma}\label{lem:tmcontinuity}
For every $m \geq 0$, the map $\bfT^m$ is uniformly continuous.
\end{lemma}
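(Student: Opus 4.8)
The plan is to reduce uniform continuity of $\bfT^m$ to a quantitative estimate on $\dZ(\bfT^m(\bfz),\bfT^m(\bfz'))$ in terms of $\dZ(\bfz,\bfz')$, exploiting that $\bfT^m$ only involves a \emph{finite} window of $m+1$ coefficients and that the metric $\dZ$ discounts far-away coordinates geometrically. First I would fix $\bfz,\bfz' \in \RZ$ and look at a single output coordinate $T_k^m(\bfz) - T_k^m(\bfz') = \sum_{j=0}^m \psi_j (z_{k-j} - z'_{k-j})$, so that $|T_k^m(\bfz) - T_k^m(\bfz')| \le \big(\sum_{j=0}^m \psi_j\big)\max_{0\le j\le m}|z_{k-j}-z'_{k-j}|$; here $\sum_{j=0}^m \psi_j \le \sum_{j=0}^\infty \psi_j =: C < \infty$ by (C1). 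The point is that the $k$-th output depends only on input coordinates indexed by $k-m,\dots,k$, which lie within distance $m$ of $k$ in the index set.

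Next I would feed this into the definition \eqref{dinftydef}. A clean way is to prove continuity at a fixed point and then upgrade: given $\vep>0$, choose $M$ so large that $\sum_{|i|>M} 2^{-|i|} < \vep/4$; on the finitely many coordinates $|k|\le M$, $|T_k^m(\bfz)-T_k^m(\bfz')| \wedge 1 \le (C \max_{|i|\le M+m}|z_i - z'_i|)\wedge 1$, and $\dZ(\bfz,\bfz')$ small forces each $|z_i-z'_i|$ small for $|i|\le M+m$ (using that $|z_i - z'_i|\wedge 1 \le 2^{|i|+1}\dZ(\bfz,\bfz')$). Summing the contributions of $|k|\le M$ and bounding the tail $|k|>M$ crudely by $\vep/4$ gives $\dZ(\bfT^m(\bfz),\bfT^m(\bfz')) \le \vep$ once $\dZ(\bfz,\bfz')$ is below an explicit threshold depending only on $\vep$, $m$, $M$, and $C$ — \emph{not} on $\bfz$. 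That uniformity across all $\bfz$ is exactly what uniform continuity demands. Alternatively, and perhaps more slickly, one can produce a single modulus-of-continuity bound of the form $\dZ(\bfT^m(\bfz),\bfT^m(\bfz')) \le K_m\, \dZ(\bfz,\bfz')^{\theta}$ for suitable constants $K_m$ and $\theta\in(0,1]$ by noting that $|z_i - z'_i|\wedge 1 \le 2^{|i|+1}\dZ(\bfz,\bfz')$ and carefully shifting the summation index from $k$ to $k-j$, absorbing the resulting factor $2^{|j|}\le 2^m$.

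The main obstacle I anticipate is purely bookkeeping rather than conceptual: the truncation-by-$1$ in the metric ($x\wedge 1$) is not compatible with the linearity of $\bfT^m$, so the naive estimate $|(a+b)\wedge 1| \le |a\wedge 1| + |b\wedge 1|$ must be used in the right place, and the index shift $k \mapsto k-j$ changes the weight $2^{-(|k|+1)}$ to $2^{-(|k-j|+1)}$, which one controls by $2^{-(|k-j|+1)} \le 2^{|j|}\,2^{-(|k|+1)} \le 2^m\, 2^{-(|k|+1)}$. Once these two points are handled, the estimate goes through with constants depending only on $m$ and $C=\sum_j \psi_j$, and uniform continuity follows. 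No deep input is needed beyond (C1); the homeomorphism with $(\R_+^\infty,\dinfty)$ and Theorem~\ref{portthm} are not required here.
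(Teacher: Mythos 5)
Your proposal is correct and follows essentially the same route as the paper's proof: truncate the metric sum at a level $M$ chosen so the tail contributes at most $\vep/2$, bound the remaining finitely many output coordinates by $\bigl(\sum_{j=0}^m \psi_j\bigr)\bigvee_{|i|<M+m}|z_i-z'_i|$, and recover each $|z_i-z'_i|$ from $\dZ(\bfz,\bfz')$ via the weight $2^{|i|+1}$, yielding a threshold independent of the base point. (Incidentally, your choice of threshold with $C$ in the denominator is the right one; the paper's displayed $\delta$ has the factor $\sum_j\psi_j$ on the wrong side, an evident typo.)
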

\begin{proof}
Fix $m \geq 0$ and $\epsilon > 0$. Let $M>0$ be such that $2.2^{-M} < \vep/2$. Take $\mathbf{x} = (x_i , i \in \bbZ), \mathbf{y} = (y_i , i \in \bbZ) \in \RZ$. Then
\begin{align}
\dZ(\bfT^m(\mathbf{x}),\bfT^m(\mathbf{y})) &< \sum_{|i| < M}  \frac{ |\sum_{j=0}^{m} \psi_j x_{i-j}  - \sum_{j=0}^{m} \psi_j y_{i-j} | \wedge 1}{2^{|i|+1}} + \vep/2 \nn \\
\label{lem:tmcontinuity:eq1}
&\leq 2 (\sum_{j=0}^{m} \psi_j) (\bigvee_{|i| < M+m}|x_i-y_i|) + \vep/2 
\end{align}
Let $\delta < (\sum_{j=0}^{m} \psi_j)  \frac{\vep}{4} 2^{-(M+m)}$ and assume that $\dZ(\mathbf{x},\mathbf{y}) < \delta$. Then from \eqref{dinftydef}, we get that 
\begin{align}
 \bigvee_{|i| < M+m}|x_i-y_i| < 2^{(M+m)} \delta <  (\sum_{j=0}^{m} \psi_j)  \frac{\vep}{4}. \nn
\end{align}
Then using \eqref{lem:tmcontinuity:eq1} we get that
\begin{align*}
\dZ(\bfT^m(\mathbf{x}),\bfT^m(\mathbf{y})) < \vep.
\end{align*}
\end{proof}

\begin{lemma}\label{lem:tmcjclosed}
For every $m \geq 0$ and $j \geq 0$,  $\bfT^m(\bfC_{\leq j})$ is closed, where $\bfC_{\leq j}$ is defined as in \eqref{cjdef}.
\end{lemma}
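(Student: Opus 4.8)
The plan is to verify that $\bfT^m(\bfC_{\leq j})$ is sequentially closed in $(\RZ,\dZ)$: we take $\mathbf{x}^{(n)}=\bfT^m(\mathbf{z}^{(n)})$ with $\mathbf{z}^{(n)}\in\bfC_{\leq j}$ and $\mathbf{x}^{(n)}\to\mathbf{x}$, and produce $\mathbf{z}^*\in\bfC_{\leq j}$ with $\mathbf{x}=\bfT^m(\mathbf{z}^*)$. If $\mathbf{x}=\zero$ we are done, since $\zero=\bfT^m(\zero)$ and $\zero\in\bfC_{\leq j}$, so assume $\mathbf{x}\neq\zero$. Recalling that $\bfC_{\leq j}$ is exactly the set of sequences with at most $j$ nonzero coordinates, we pass to a subsequence (harmless: the limit of the original sequence is unique, and it is that limit which must land in the set) along which $\mathbf{z}^{(n)}$ has a fixed number $p\leq j$ of nonzero coordinates, located at positions $s_1^{(n)}<s_2^{(n)}<\cdots<s_p^{(n)}$, and along which, for each $u$, the integer sequence $(s_u^{(n)})_n$ is either constant or diverges to $+\infty$ or to $-\infty$.

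Next I isolate the coordinates of $\mathbf{z}^{(n)}$ that do not escape to infinity. Let $U=\{u:(s_u^{(n)})_n\text{ is constant}\}$, and for $u\in U$ write $s_u^{(n)}=s_u^*$ for all large $n$. Since the positions are strictly increasing, $U$ is a block of consecutive indices $\{u_0,u_0+1,\dots,u_1\}$, the indices $u<u_0$ have $s_u^{(n)}\to-\infty$, and the indices $u>u_1$ have $s_u^{(n)}\to+\infty$. Because $\bfT^m(\mathbf{e}_i)$ is supported in $\{i,i+1,\dots,i+m\}$, for any fixed coordinate $k$ the terms arising from ``escaping'' positions contribute nothing to the $k$th coordinate of $\bfT^m(\mathbf{z}^{(n)})$ once $n$ is large; hence, were $U$ empty, we would get $x_k=0$ for every $k$, contradicting $\mathbf{x}\neq\zero$. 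So $U\neq\emptyset$.

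It remains to recover limiting coefficients on the block $U$. Writing $\psi_\ell:=0$ for $\ell\notin\{0,\dots,m\}$, the previous paragraph shows that for each fixed $k$ and all large $n$, $x_k^{(n)}=\sum_{u\in U}\psi_{k-s_u^*}\,z^{(n)}_{s_u^*}$. Evaluating at $k=s_{u_0}^*$, every term with $u>u_0$ drops out (its index $k-s_u^*$ is negative), so $x^{(n)}_{s_{u_0}^*}=\psi_0\,z^{(n)}_{s_{u_0}^*}$ and hence $z^{(n)}_{s_{u_0}^*}\to x_{s_{u_0}^*}/\psi_0$ by (A1); evaluating successively at $k=s_{u_0+1}^*,s_{u_0+2}^*,\dots$ and solving for the new unknown each time (again dividing by $\psi_0>0$) shows $z^{(n)}_{s_u^*}\to z_u^*\geq 0$ for every $u\in U$. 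Put $\mathbf{z}^*=\sum_{u\in U}z_u^*\,\mathbf{e}_{s_u^*}$; then $\mathbf{z}^*\in\bfC_{\leq|U|}\subseteq\bfC_{\leq j}$, and letting $n\to\infty$ in the displayed identity gives $x_k=\sum_{u\in U}\psi_{k-s_u^*}z_u^*=(\bfT^m(\mathbf{z}^*))_k$ for every $k$, i.e.\ $\mathbf{x}=\bfT^m(\mathbf{z}^*)\in\bfT^m(\bfC_{\leq j})$.

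The only genuine difficulty is the bookkeeping: coordinates of $\mathbf{z}^{(n)}$ that drift off to $\pm\infty$ silently vanish in the limit, so the ``effective order'' may drop from $p$ to $|U|$, while coordinates lying within distance $m$ of one another create overlapping blocks in $\bfT^m(\mathbf{z}^{(n)})$ whose contributions must be disentangled --- and it is precisely Assumption (A1), $\psi_0>0$, that makes the left-to-right recursion for the coefficients well posed. Everything else --- the subsequence extractions, the contiguity of $U$, and the support statement $\mathrm{supp}\,\bfT^m(\mathbf{e}_i)\subseteq\{i,\dots,i+m\}$ --- is routine. (As a sanity check, the $m=0$ case of this lemma recovers the elementary fact that each $\bfC_{\leq j}$ is itself closed.)
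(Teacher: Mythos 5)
Your argument is correct, and it rests on the same two pillars as the paper's proof: the image $\bfT^m(\mathbf{e}_i)$ is supported on the finite window $\{i,\dots,i+m\}$, so coordinates of $\mathbf{z}^{(n)}$ whose locations drift to $\pm\infty$ contribute nothing to any fixed coordinate of $\bfT^m(\mathbf{z}^{(n)})$ for large $n$; and $\psi_0>0$ (Assumption (A1)) makes the resulting triangular system invertible, so the surviving coefficients converge. The organization, however, is genuinely different. The paper proceeds by induction on $j$: at each stage it either detects a single escaping index and drops that term (invoking the induction hypothesis on $\bfC_{\leq J-1}$), or, once the index vector stabilizes, recovers only the extreme coefficient $\lambda_J^n$ via $\psi_0$, subtracts that one term, and again appeals to the induction hypothesis. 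You instead do everything in one pass: one subsequence extraction classifies every position as fixed or escaping, monotonicity of the positions forces the fixed set $U$ to be a contiguous block flanked by positions escaping to $-\infty$ on the left and $+\infty$ on the right, and a left-to-right recursion recovers all surviving coefficients simultaneously, with the possible drop in order from $p$ to $|U|$ absorbed by $\bfC_{\leq|U|}\subseteq\bfC_{\leq j}$. Your treatment is in fact slightly more careful than the paper's on one point: the paper's trichotomy ($i_J^n\to-\infty$, or $i_1^n\to+\infty$, or else all indices bounded) passes over the mixed case in which some positions escape in each direction while others stay put, whereas your classification handles it explicitly; the paper's induction would absorb that case, but only after the kind of bookkeeping you spell out. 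The trade-off is that the induction lets the paper state each step tersely, while your direct argument front-loads the combinatorics but then closes in a single stroke.
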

\begin{proof}
Fix $m \geq 0$ and observe that for $j=0$ , $\bfT^m(\bfC_{\leq 0})= \bfT^m ( \{ \mathbf{0}_{\infty} \}) = \{ \mathbf{0}_{\infty} \}$ which is trivially closed. This behooves us to prove the result by induction. So assume that the result holds for $j < J$. Take $\mathbf{z^n} \in \bfC_{=J}$  such that $\mathbf{z}^n \to \mathbf{z} \in \RZ$. It is enough to assume this as $\bfC_{\leq J} = \bfC_{\leq J-1} \cup \bfC_{=J}$.  Further assume that we have sequences $\lambda^n_1, \lambda^n_2, \ldots, \lambda^n_J > 0$ and $ i_1^n< i_2^n< \ldots< i_J^n \in \bbZ$ such that 
\begin{align*}
\mathbf{z}^n &= \sum_{k=1}^{J} \lambda^n_k \mathbf{e}_{ i_k^n} \text{ and } \\
\bfT^m(\mathbf{z}^n) &= \sum_{l=0}^{m} \sum_{k=1}^{J} \psi_l \lambda^n_k \mathbf{e}_{ i_k^n - j}.
\end{align*}
Observe that if $i_J^n \to -\infty$ along some subsequence ${n_q}$, then the limit of any finte dimensional subequence of $\bfT^m(\mathbf{z}^{n_q})$ is the same as the finite dimensional subsequential limit of $ \bfT^m (\sum_{k=1}^{J-1} \lambda^{n_q}_k \mathbf{e}_{ i_k^{n_q}})$. Since limits of a sequence in $\RZ$ is determined by the limits of the finite dimensional subsequences, we have by the induction hypothesis that $\mathbf{z} \in \bfT^m(\bfC_{\leq J -1}) \subset \bfT^m(\bfC_{\leq J})$. A similar argument shows that if $i_1^n \to \infty$ along some subsequence then $\mathbf{z} \in \bfT^m(\bfC_{\leq J})$. So we must have that $( i_1^n, i_2^n, \ldots, i_J^n)$ are contained in some bounded set and so they must equal some $( i_1, i_2, \ldots, i_J)$ infintely often where $i_1 <  i_2 < \ldots <  i_J$. Without loss of generality we may now assume that 
\begin{align}
\mathbf{z}^n &= \sum_{k=1}^{J} \lambda^n_k \mathbf{e}_{ i_k} \text{ and } \nn \\
\label{lem:tmcjclosed:eq1}
\bfT^m(\mathbf{z}^n) &= \sum_{l=0}^{m} \sum_{k=1}^{J} \psi_l \lambda^n_k \mathbf{e}_{ i_k - j}.
\end{align}
Since $(\bfT^m(\mathbf{z}^n))_{i_J - m} = \psi_0 \lambda_J^n$ converges and by assumption (A1), $\psi_0 >0$, we must have that $\lambda_J^n \to \lambda $ for some $\lambda \geq 0$. This  leads to $\bfT^m(\sum_{k=1}^{J-1} \lambda_k^n \mathbf{e}_{ i_k}) \to \mathbf{z} - \bfT^m(\lambda \mathbf{e}_{i_J})$. But the induction hypothesis now gives us that $\mathbf{z} - \bfT^m(\lambda \mathbf{e}_{i_J}) \in \bfT^m(\bfC_{ \leq J-1})$. Thus we get that $\mathbf{z} \in \bfT^m(\bfC_{\leq J})$ proving the induction step.
\end{proof}

A quick application of Theorem \ref{mappingtheorem} now gives us the follwing result.
\begin{theorem}\label{tmconvergence}
For every $m \geq 0$ and $j \geq 0$, $\mutj \circ (\bfT^m )^{-1} \to \muj \circ (\bfT^m )^{-1}$ in $\bfM(\RZ \setminus \bfT^m(\bfC_{\leq j}))$ or equivalently
\begin{align*}
 t P \left[ \bfX^m/b(t^{1/(j+1)}) \in \cdot \right] \to \sum_{(i_1,\dots,i_{j+1})}\int \mathbbm{1} \left\{ \bfT^m(\sum_{k=1}^{j+1}z_{k} \mathbf{e}_{i_k})\in \cdot \right\} \nu_\alpha (\ud z_1)\dots \nu_\alpha (\ud z_{j+1}) .
\end{align*}
\end{theorem}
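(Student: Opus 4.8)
The plan is to obtain Theorem~\ref{tmconvergence} as a direct application of the continuous mapping Theorem~\ref{mappingtheorem} to the convergence supplied by Theorem~\ref{rinftyconvergence}. Fix $m \geq 0$ and $j \geq 0$. By Lemma~\ref{lem:tmcontinuity} the map $\bfT^m : \RZ \to \RZ$ is uniformly continuous, and by Lemma~\ref{lem:tmcjclosed} the set $\bfT^m(\bfC_{\leq j})$ is closed in $\RZ$. Hence, with $\bfS = \bfS' = \RZ$, $\bfC = \bfC_{\leq j}$, $\bfO = \bfO_{j+1} = \RZ \setminus \bfC_{\leq j}$, and $\bfC' = \bfT^m(\bfC_{\leq j})$, Theorem~\ref{mappingtheorem} applies and tells us that $\widehat{\bfT^m} : \bfM(\bfO_{j+1}) \to \bfM(\RZ \setminus \bfT^m(\bfC_{\leq j}))$, $\mu \mapsto \mu \circ (\bfT^m)^{-1}$, is continuous. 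Since Theorem~\ref{rinftyconvergence} gives $\mutj \to \muj$ in $\bfM(\bfO_{j+1}) = \bfM(\mathbb{O}_{j+1})$ (recall $\mathbb{O}_{j+1} = \RZ \setminus \bfC_{\leq j}$), applying $\widehat{\bfT^m}$ yields $\mutj \circ (\bfT^m)^{-1} \to \muj \circ (\bfT^m)^{-1}$ in $\bfM(\RZ \setminus \bfT^m(\bfC_{\leq j}))$, which is the first assertion.

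Next I would unwind the two sides into the probabilistic form. For the left-hand side, since $\bfX^m = \bfT^m(\bfZ)$ and $\mutj(\cdot) = tP(\bfZ/b(t^{1/(j+1)}) \in \cdot)$, and since $\bfT^m$ commutes with scalar multiplication — $\bfT^m(\lambda \bfz) = \lambda \bfT^m(\bfz)$ for $\lambda > 0$, immediate from \eqref{tmdef} — we get
\begin{align*}
\mutj \circ (\bfT^m)^{-1}(\cdot) = t P\bigl[ \bfT^m(\bfZ)/b(t^{1/(j+1)}) \in \cdot \bigr] = t P\bigl[ \bfX^m/b(t^{1/(j+1)}) \in \cdot \bigr].
\end{align*}
For the right-hand side, I would push the representation of $\muj$ from \eqref{mujdef} through $(\bfT^m)^{-1}$: by the change-of-variables/transfer formula for image measures,
\begin{align*}
\muj \circ (\bfT^m)^{-1}(\cdot) = \sum_{(i_1,\dots,i_{j+1})} \int \mathbbm{1}\Bigl\{ \bfT^m\Bigl( \sum_{k=1}^{j+1} z_k \mathbf{e}_{i_k} \Bigr) \in \cdot \Bigr\} \, \nu_\alpha(\ud z_1)\cdots\nu_\alpha(\ud z_{j+1}),
\end{align*}
which matches the stated limit.

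The substantive content has already been front-loaded into Lemmas~\ref{lem:tmcontinuity} and~\ref{lem:tmcjclosed}, so the proof proper is short; the only point demanding a little care is the bookkeeping of spaces. One must check that $\bfO_{j+1}$ in the sense of Section~3.1 is exactly $\RZ \setminus \bfC_{\leq j}$, so that the ambient removed cone matches the $\bfC$ fed into the mapping theorem, and that $\bfC_{\leq j}$ is indeed a closed cone (closedness is what Lemma~\ref{lem:tmcjclosed} gives for its image; cone-ness of $\bfC_{\leq j}$ is clear since scaling by $\lambda>0$ preserves the count $\sum_i \bep_{x_i}((0,\infty))$, and closedness of $\bfC_{\leq j}$ itself follows from the same finite-dimensional-limit argument as in Lemma~\ref{lem:tmcjclosed} — or one may simply note that Lemma~\ref{lem:tmcjclosed} with $m=0$, where $\bfT^0 = \psi_0 \,\mathrm{id}$, already delivers it). I expect the only genuine obstacle to be none beyond this: once the hypotheses of Theorem~\ref{mappingtheorem} are verified, the conclusion is automatic, and the ``equivalently'' clause is pure notational translation via the image-measure formula. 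It is worth remarking that the limit measure $\muj \circ (\bfT^m)^{-1}$ is nonzero, since $\muj$ is nonzero and concentrates on $\bfC_{=j+1}$, on which $\bfT^m$ acts injectively enough (again by (A1), $\psi_0>0$) that the image is not the zero measure; this ensures the statement genuinely asserts regular variation of $\bfX^m$ in $\bfM(\RZ \setminus \bfT^m(\bfC_{\leq j}))$ rather than a vacuous convergence to zero.
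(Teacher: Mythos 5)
Your proposal is correct and is exactly the paper's argument: the paper proves Theorem~\ref{tmconvergence} as ``a quick application of Theorem~\ref{mappingtheorem}'' using Lemmas~\ref{lem:tmcontinuity} and~\ref{lem:tmcjclosed}, just as you do. Your extra remarks (the scaling-equivariance of $\bfT^m$, the image-measure rewriting, and the nonvanishing of the limit) are correct bookkeeping that the paper leaves implicit.
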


\begin{remark}
Observe that Theorem \ref{tmconvergence} implies an infinitude of regular variation properties for $\mathbf{X}^m$.

For example,  for $j=0$,
\begin{align*}
 t P \left[ \bfX^m/b(t) \in \cdot \right] \to \nu^{m,(0)} (\cdot) \text{ in } \bfM(\RZ \setminus \bfT^m( \{ \mathbf{0}_{\infty} \} )) = \bfM(\RZ \setminus \{\mathbf{0}_{\infty}\}),
\end{align*}
where
\begin{align*}
\nu^{m,(0)} (\cdot) =  \sum_{i=-\infty}^{\infty} \int \mathbbm{1} \left\{ \bfT^m(z_i \mathbf{e}_i)\in \cdot \right\} \nu_\alpha (\ud z_i).
\end{align*}
From the above it is clear that $\nu^{m,(0)}$ is a non-zero measure, it is finite on subsets of $\RZ$ bounded away from $\mathbf{0}_{\infty}$ and its support is on $\bfT^m(\bfC_{=1})$. Thus $\mathbf{X}^m$ is regularly varying on $\RZ \setminus \{\zero\}$ with scaling function $b(\cdot)$ and limit measure $\nu^{m,(0)}$. Using \eqref{mualtdef} assuming $\psi_j > 0$ for all $j \leq m$, we have the following alternate and slightly more  illuminating formulation for $\nu^{m,(0)}$, namely
\begin{align}
& \nu^{m,(0)}(\ldots, \ud z_1, \ud z_0, \ud z_{-1}, \ldots) \nn\\
&\qquad=  \sum_{i=-\infty}^{\infty}  \left( \prod_{k < i }\epsilon_0 (\ud z_k) \right)    \left( \prod_{i \leq k \leq i +m} \nu_{\alpha}\left (\frac{\ud z_k}{\psi_{k-i}} \right) \right) \left( \prod_{k > i + m }\bep_0 (\ud z_k) \right). \nn
\end{align}
 Further for any $k \in \bbZ$, we have that 
\begin{align} \label{xkmlimit}
t P[ X_k^m > b(t)x] \to  \left( \sum_{l=0}^{m} \psi_l^{\alpha} \right) x^{-\alpha} \text{ for } x>0.
\end{align}

Similarly for $j=1$, 
\begin{align*}
 t P \left[ \bfX^m/b(t^{1/2}) \in \cdot \right] \to \nu^{m,(1)} (\cdot) \text{ in } \bfM(\RZ \setminus \bfT^m( \bfC_{\leq 1})) ,
\end{align*}
where $\nu^{m,(1)}$ is a non-zero measure on $\RZ \setminus \bfT^m( \bfC_{\leq 1})$ with support $\bfT(\bfC_{=2})$. So $\bfX^m$ is also regularly varying on $\RZ \setminus \bfT^m( \bfC_{\leq 1})$ with scaling function $b(t^{1/2})$. Observe that for $j=0$ we removed just $\bfT^m(\bfC_{\leq 0}) = \{\zero \}$ from $\RZ$ and obtained that $\bfX^m$ was regularly varying with a limit measure concentrating on $\bfT^m(\bfC_{=1})$ which is a very small part of the entire state space $\RZ \setminus \{ \zero \}$. Now, on also removing the support of $\nu^{m,(0)}$ ie. $\bfT^m(\bfC_{=1})$ from the state space we obtained a new regular variation property for $\bfX^m$ on a smaller state space $\RZ \setminus \bfT^m(\bfC_{\leq 1})$ with a finer scaling function $b(t^{1/2})$. This regular variation property was in some sense hidden by the cruder scaling that we used for the larger state space. This is a typical example of hidden regular variation. For a more expository account on such a nested sequence of regular variation properties in the case of i.i.d.~ heavy tailed random variables, see \cite{lindskog:resnick:2013} (Section 4.5).

In fact we have an increasing sequence of cones,
\begin{align*}
\bfT^m(\bfC_{\leq 0}) \subset \bfT^m(\bfC_{\leq 1}) \subset \ldots \bfT^m(\bfC_{\leq j}) \subset \ldots, 
\end{align*}
a sequence of non-zero measures $\nu^{m,(j)} , j \geq 0$ where $\nu^{m,(j)}$ is supported on\\  $\bfT^m(\bfC_{\leq j+1}) \setminus \bfT^m(\bfC_{\leq j}) $ and a sequence of decreasing scaling functions
\begin{align*}
b(t) > b(t^{1/2}) > \ldots b(t^{1/(j+1)}).\ldots
\end{align*}
such that $\bfX^m $ is regularly varying on $\RZ \setminus \bfT^m(\bfC_{\leq j})$ with limit measure $\nu^{m,(j)}$ and scaling function $b(t^{1/(j+1)})$.
Thus by removing more and more of the state space and using finer and finer scaling functions we are able to get a more detailed picture of the extremal properties of $\bfX^m$.
\end{remark}

\subsection{Regular variation of the MA($\infty$) process}
 As mentioned before the map $\bfT^{\infty}$ is only well-defined $P$-almost surely. For each $j > 0$, $\bfT^{\infty}(\bfC_{\leq j})$ is not closed, even though  $\bfT^{\infty}$ is well-defined on each $\bfC_{\leq j}$. This prevents us from proving a result implying hidden regular variation of $\bfX$ as in  Theorem \ref{tmconvergence} for $\bfX^m$. However, the fact that $\bfT^{\infty}(\{\mathbf{0}_{\infty}\}) = \{\mathbf{0}_{\infty}\}$ and the use of (C4) and interpreting $\bfX$ as the limit of $\bfX^m$ as $m \to \infty$, allows us to prove the following result. The proof, except for technical details, is similar in spirit to \cite{resnick:2007} (Theorem 3.5) or \cite{billingsley:1999} (Theorem 3.2).
\begin{theorem}\label{tinftyconvergence}
$\mu_t^{(0)} \circ (\bfT^{\infty} )^{-1} \to \mu^{(0)} \circ (\bfT^{\infty} )^{-1} =  \nu^{(0)}$ in $\bfM(\RZ \setminus \{\mathbf{0}_{\infty}\}) = \bfM(\bfO_0)$ or equivalently
\begin{align*}
 t P \left[ \bfX/b(t) \in \cdot \right] \to  \sum_{i=\infty}^{\infty} \int \mathbbm{1} \left\{ \bfT^{\infty}(z_i \mathbf{e}_i)\in \cdot \right\} \nu_\alpha (\ud z_i).
\end{align*}
\end{theorem}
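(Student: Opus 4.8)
The plan is to deduce Theorem~\ref{tinftyconvergence} from Theorem~\ref{tmconvergence} (taken with $j=0$) by a converging-together argument along the truncations $\bfX^m$, in the spirit of \cite{resnick:2007} (Theorem~3.5). By Theorem~\ref{portthm}(ii) it is enough to show that $t\int f(\bfX/b(t))\,\ud\Prob\to\int f\,\ud\nu^{(0)}$ for every $f\in\C(\bfO_0)$ that is uniformly continuous on $\RZ$; fix such an $f$, pick $r>0$ with $f\equiv 0$ on $\{\mathbf{x}:\dZ(\mathbf{x},\zero)<r\}$, and set $w_f(\eta)=\sup\{|f(\mathbf{x})-f(\mathbf{y})|:\dZ(\mathbf{x},\mathbf{y})\le\eta\}$, so $w_f(\eta)\downarrow 0$ as $\eta\downarrow0$. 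Let $\nu^{m,(0)}=\muz\circ(\bfT^m)^{-1}$ be the limit furnished by Theorem~\ref{tmconvergence}. The proof would rest on three facts: (a) for each fixed $m$, $t\int f(\bfX^m/b(t))\,\ud\Prob\to\int f\,\ud\nu^{m,(0)}$ as $t\to\infty$ --- this is Theorem~\ref{tmconvergence}; (b) $\int f\,\ud\nu^{m,(0)}\to\int f\,\ud\nu^{(0)}$ as $m\to\infty$; and (c) the negligibility bound $\lim_{m\to\infty}\limsup_{t\to\infty}\bigl|t\int f(\bfX/b(t))\,\ud\Prob-t\int f(\bfX^m/b(t))\,\ud\Prob\bigr|=0$. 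From (a)--(c) the usual $3\vep$ passage through the iterated limit yields $\limsup_{t\to\infty}\bigl|t\int f(\bfX/b(t))\,\ud\Prob-\int f\,\ud\nu^{(0)}\bigr|=0$, which is the assertion; here the identity $\mu^{(0)}\circ(\bfT^{\infty})^{-1}=\nu^{(0)}$ is just the definition of $\nu^{(0)}$ unwound through the form of $\mu^{(0)}$ on $\bfC_{=1}$, and $\nu^{(0)}\in\bfM(\bfO_0)$ is nonzero by the estimates below together with $\psi_0>0$.

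For (b) I would observe that $\bfT^m(z\mathbf{e}_i)=z\sum_{l=0}^m\psi_l\mathbf{e}_{i+l}\to z\sum_{l=0}^{\infty}\psi_l\mathbf{e}_{i+l}=\bfT^{\infty}(z\mathbf{e}_i)$ in $\RZ$ as $m\to\infty$ (each coordinate stabilizes), so $f(\bfT^m(z\mathbf{e}_i))\to f(\bfT^{\infty}(z\mathbf{e}_i))$ by continuity. Since $\dZ(\bfT^m(z\mathbf{e}_i),\zero)\le\tfrac{z}{2}\sum_{l\ge0}\psi_l 2^{-|i+l|}=:\tfrac{z}{2}\,g(i)$, the map $z\mapsto f(\bfT^m(z\mathbf{e}_i))$ is supported in $\{z\ge 2r/g(i)\}$, a set of $\nu_\alpha$-mass $(g(i)/2r)^{\alpha}$; assumptions (C1)--(C2) force $g(i)\to 0$ geometrically in $|i|$ and $\sum_i g(i)^{\alpha}<\infty$, so $\sum_i\|f\|_\infty(g(i)/2r)^{\alpha}<\infty$ is a dominating bound uniform in $m$, and dominated convergence over $z$ and over $i\in\bbZ$ gives $\int f\,\ud\nu^{m,(0)}=\sum_i\int f(\bfT^m(z\mathbf{e}_i))\,\nu_\alpha(\ud z)\to\sum_i\int f(\bfT^{\infty}(z\mathbf{e}_i))\,\nu_\alpha(\ud z)=\int f\,\ud\nu^{(0)}$. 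Replacing $f$ by the indicator of $\{\dZ(\cdot,\zero)\ge\rho\}$ in the same estimate shows $\nu^{m,(0)}(\{\dZ(\cdot,\zero)\ge\rho\})\le(2\rho)^{-\alpha}\sum_i g(i)^{\alpha}<\infty$ uniformly in $m$ (the bound $\dZ(\bfT^m(z\mathbf{e}_i),\zero)\le\tfrac{z}{2}g(i)$ is $m$-free), and likewise for $\nu^{(0)}$; in particular $\nu^{(0)}\in\bfM(\bfO_0)$.

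The core is (c), and here (C4) is the essential input. Since $\bfX-\bfX^m=\bigl(\sum_{j>m}\psi_j Z_{k-j},\,k\in\bbZ\bigr)$ has nonnegative coordinates and $\dZ(\mathbf{a}+\mathbf{c},\mathbf{a})=\dZ(\mathbf{c},\zero)$, we have $\dZ(\bfX/b(t),\bfX^m/b(t))=\dZ((\bfX-\bfX^m)/b(t),\zero)$. Fix $\eta\in(0,r)$ and split along $A_{t,m}=\{\dZ((\bfX-\bfX^m)/b(t),\zero)>\eta\}$. On $A_{t,m}^{c}$ one has $|f(\bfX/b(t))-f(\bfX^m/b(t))|\le w_f(\eta)$, and this difference can be nonzero only when $\dZ(\bfX^m/b(t),\zero)\ge r-\eta$; since that set is closed and bounded away from $\zero$, fact (a) together with the portmanteau inequality for closed sets (a consequence of Theorem~\ref{portthm}(iii)) gives $\limsup_t t\,\Prob[\dZ(\bfX^m/b(t),\zero)\ge r-\eta]\le\nu^{m,(0)}(\{\dZ(\cdot,\zero)\ge r-\eta\})\le C$ for a constant $C$ independent of $m$ by the estimate of (b), so the $A_{t,m}^{c}$ part contributes at most $w_f(\eta)\,C$ in the iterated $\limsup$. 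On $A_{t,m}$ I would use the crude bound $2\|f\|_\infty\,t\,\Prob[A_{t,m}]$ and then, choosing $M$ with $\sum_{|i|>M}2^{-|i|-1}<\eta/2$, note that $A_{t,m}$ forces $\sum_{j>m}\psi_j Z_{k-j}>b(t)\eta'$ for some $|k|\le M$ and a suitable $\eta'=\eta'(\eta,M)>0$, whence $t\,\Prob[A_{t,m}]\le\sum_{|k|\le M}t\,\Prob[\sum_{j>m}\psi_j Z_{k-j}>b(t)\eta']$ and $\lim_{m}\limsup_{t}$ of the right side vanishes by (C4). Sending $t\to\infty$, then $m\to\infty$, then $\eta\downarrow0$ yields (c).

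I expect (c) to be the main obstacle --- concretely, the combination of converting $\dZ$-smallness of $(\bfX-\bfX^m)/b(t)$ into control of finitely many coordinates so that (C4) can be invoked, and of securing the $m$-uniform tail bound for $\nu^{m,(0)}$ on sets bounded away from $\zero$ that is needed to handle the $A_{t,m}^{c}$ contribution. Once those are in place the remaining steps are routine manipulations with the converging-together principle and Theorem~\ref{portthm}.
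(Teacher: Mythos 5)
Your proposal is correct in substance and follows the same overall strategy the paper announces (a Slutsky/converging-together argument through the truncations $\bfX^m$, with Theorem \ref{tmconvergence} supplying the fixed-$m$ convergence and (C4) supplying the negligibility of $\bfX-\bfX^m$; your step (c) is essentially the paper's Lemma \ref{lem:d(tm,tinfty)>e} combined with the observation $\dZ(\bfX/b(t),\bfX^m/b(t))=\dZ((\bfX-\bfX^m)/b(t),\zero)$). Where you genuinely diverge is in how the modulus-of-continuity term is controlled and in how the limit measures are compared. The paper works on the domain side: it introduces $\bfG=\{\bfz:\dZ(\bfz,\zero)>\delta\}$ via Lemma \ref{lem:pseudocontinuity} (which shows that $\mutz$ puts asymptotically negligible mass on points $\bfz$ close to $\zero$ whose image $\bfTinf(\bfz)$ is nonetheless far from $\zero$), restricts all integrals to $\bfG$, and then only needs $\mutz(\bfG)\to\muz(\bfG)<\infty$ and dominated convergence of $\bfTm\to\bfTinf$ on $\bfC_{=1}\cap\bfG$. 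You instead work entirely on the image side: you replace Lemma \ref{lem:pseudocontinuity} by the closed-set portmanteau bound $\limsup_t tP[\dZ(\bfX^m/b(t),\zero)\ge r-\eta]\le\nu^{m,(0)}(\{\dZ(\cdot,\zero)\ge r-\eta\})$ together with an explicit tail estimate for $\nu^{m,(0)}$ that is uniform in $m$, and you prove your step (b) by an explicit dominated-convergence computation with the dominating series $\sum_i(g(i)/2r)^\alpha$. Your route is self-contained and arguably more concrete, but it puts real weight on the claim $\sum_i g(i)^\alpha<\infty$, and your stated justification is not right: $g(i)$ does decay geometrically as $i\to+\infty$ (there $g(i)\le 2^{-i}\sum_l\psi_l$), but for $i\to-\infty$ one has $g(-n)\ge\psi_n$, so the decay is only as fast as the coefficients themselves. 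The summability does hold, but it needs an argument: for $\alpha\le1$ use subadditivity of $x\mapsto x^\alpha$ and (C2); for $\alpha>1$ use Jensen's inequality with respect to the probability measures proportional to $(2^{-|l-n|})_l$ to get $g(-n)^\alpha\le 3^{\alpha-1}\sum_l\psi_l^\alpha 2^{-|l-n|}$ and then sum in $n$ using (C2). (Alternatively, bounding $(z\psi_l)\wedge1\le z^\delta\psi_l^\delta$ and invoking (A2) works uniformly in $\alpha$.) With that repair, and with the standard caveat that the portmanteau inequality you invoke requires choosing a continuity radius for the restriction in Theorem \ref{portthm}(iii), your argument goes through; the paper's detour through Lemma \ref{lem:pseudocontinuity} buys it the luxury of never needing this explicit uniform tail estimate.
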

\begin{remark}
(i) Theorem \ref{tinftyconvergence} implies that $\bfX$ is regularly varying on  $\RZ \setminus \zero$ with limit measure $\nu^{(0)}$ and scaling function $b(\cdot)$. The limit measure $\nu^{(0)}$ can also be expressed in the following way emphasizing the fact that its support is on $\bfTinf(\bfC_{=1})$ and it is indeed non-zero,
\begin{align}
& \nu^{(0)}(\ldots, \ud z_1, \ud z_0, \ud z_{-1}, \ldots) \nn\\
&\qquad=  \sum_{i=-\infty}^{\infty}  \left( \prod_{k < i \text{ or } \psi_{k-i} = 0}\bep_0 (\ud z_k) \right)    \left( \prod_{ k \geq i \text{  and } \psi_{k-i} > 0 }\nu_{\alpha}\left (\frac{\ud z_k}{\psi_{k-i}} \right) \right). \nn
\end{align}
Also we have that any $k \in \bbZ$, we have that 
\begin{align*}
t P[ X_k > b(t)x] \to  \left( \sum_{l=0}^{\infty} \psi_l^{\alpha} \right) x^{-\alpha} \text{ for } x>0.
\end{align*}
\\
(ii) It is also instructive to compare Theorem \ref{tinftyconvergence} to Theorem 2.4 in \cite{davis:resnick:1985} where a point process version of the same result was obtained.
\end{remark}

Before proving Theorem \ref{tinftyconvergence} we prove two technical lemmas. Set $\sum_{j=0}^{\infty} \psi_j = S$ which is finite by (C1).

\begin{lemma}\label{lem:pseudocontinuity}
For any $\gamma > 0$, 
\begin{align*}
\lim_{n \to \infty} \limsup_{t \to \infty} \mutz (\{ \bfz : \dZ(\bfT^{\infty}(\bfz),\zero)>\gamma,  \dZ(\bfz,\zero) < \delta_n\}) = 0
\end{align*}
where $\delta_n = 2^{-(n+1)}/S$.
\end{lemma}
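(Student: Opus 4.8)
The plan is to reduce the assertion to condition (C4) by truncating the moving-average map at a finite level. Since $\dZ\le 1$ everywhere, we may assume $0<\gamma<1$, as otherwise the set in question is empty. The guiding observation is that the constraint $\dZ(\bfz,\zero)<\delta_n$ forces every coordinate $z_i$ with $|i|$ below a threshold growing with $n$ to be extremely small: from $\frac{z_i\wedge 1}{2^{|i|+1}}\le\dZ(\bfz,\zero)<\delta_n=2^{-(n+1)}/S$ we get $z_i\wedge 1<2^{|i|-n}/S$. Hence, if $\bfTinf(\bfz)$ is macroscopically far from $\zero$, this can only be caused by the ``tail'' contributions $\sum_{j>N}\psi_j z_{k-j}$ for finitely many indices $k$, and those tails are exactly what (C4) controls. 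Throughout, recall $\mutz(\cdot)=tP[\bfZ/b(t)\in\cdot]$.

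Concretely, first fix $K=K_\gamma$ with $2^{-K}<\gamma/2$. If $\dZ(\bfTinf(\bfz),\zero)>\gamma$, then since $\sum_{|k|>K}2^{-(|k|+1)}=2^{-K}<\gamma/2$, there is some $k$ with $|k|\le K$ such that $T_k^\infty(\bfz)>c_\gamma$, where $c_\gamma:=\gamma/(2K+1)\in(0,1)$ depends only on $\gamma$. Next, for a truncation level $N$ I would split $T_k^\infty(\bfz)=\sum_{j=0}^{N}\psi_j z_{k-j}+\sum_{j>N}\psi_j z_{k-j}$. Under $\dZ(\bfz,\zero)<\delta_n$, every relevant coordinate satisfies $|k-j|\le K+N$, so $z_{k-j}<2^{(K+N)-n}/S$ once $n$ is large, and the head is bounded by $\sum_{j=0}^{N}\psi_j\cdot 2^{(K+N)-n}/S\le 2^{(K+N)-n}$ using $\sum_{j\ge 0}\psi_j=S$. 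Choosing $N=N(n)$ with $N(n)\to\infty$ but $N(n)-n\to-\infty$ (e.g.\ $N(n)=\lfloor n/2\rfloor$), the head is $<c_\gamma/2$ for all $n$ large, so the tail satisfies $\sum_{j>N(n)}\psi_j z_{k-j}>c_\gamma/2$. This yields, for $n$ large,
\[
\{\bfz:\dZ(\bfTinf(\bfz),\zero)>\gamma,\ \dZ(\bfz,\zero)<\delta_n\}\subset\bigcup_{|k|\le K}\Bigl\{\bfz:\sum_{j>N(n)}\psi_j z_{k-j}>c_\gamma/2\Bigr\}.
\]

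Finally I would apply $\mutz$ to both sides, use $\mutz(\{\bfz:\sum_{j>N}\psi_j z_{k-j}>x\})=tP[\sum_{j>N}\psi_j Z_{k-j}>b(t)x]$, take $\limsup_{t\to\infty}$ (splitting the finite union by subadditivity of $\limsup$), and then let $n\to\infty$; since $N(n)\to\infty$, condition (C4) applied with $x=c_\gamma/2>0$ to each of the finitely many $k$ with $|k|\le K$ forces the limit to be $0$.

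The only genuine input is (C4); the step requiring the most care is the bookkeeping: $K$ must be chosen from $\gamma$ first and held fixed, while $N(n)$ must grow slowly enough that $2^{(K+N(n))-n}\to 0$ yet still diverge so that (C4) can be invoked. One also has to note that the extracted index $k$ depends on $\bfz$, which is exactly why the set inclusion lands in a finite union over $|k|\le K$ rather than in a single event; after that, everything reduces to the elementary estimate on $\dZ$ sketched above.
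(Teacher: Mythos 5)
Your proof is correct and follows essentially the same route as the paper's: both reduce the event to a finite union over coordinates $|k|\le K$, use the constraint $\dZ(\bfz,\zero)<\delta_n$ to make the head of the moving-average sum small, and invoke (C4) on the tail $\sum_{j>N}\psi_j z_{k-j}$. The one substantive difference is that you decouple the truncation level $N(n)=\lfloor n/2\rfloor$ from $n$ so that the head bound $2^{K+N(n)-n}$ genuinely tends to $0$, whereas the paper truncates at $l>k+n$ and its head bound $S\cdot 2^n\delta_n=1/2$ is a constant (so its passage to $\{\cdots>\gamma/2\}$ only works as written for $\gamma\ge 1$); your bookkeeping is therefore the more careful of the two for small $\gamma$.
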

\begin{proof}
Fix $\gamma > 0$ and let $M>0$ be such that $2\sum_{|i| \geq M} 2^{-(|i|+1)} < \gamma/2$. Then  we have
\begin{align}
&(\{ \bfz : \dZ(\bfT^{\infty}(\bfz),\zero)>\gamma,  \dZ(\bfz,\zero) < \delta_n\} \nn \\
&\quad \subset \left\{ \bfz : \sum_{|i|<M} \frac{ \bfTinf_i(\bfz) \wedge 1}{2^{|i|+1}} > \gamma/2,   \sum_{|i|<n} \frac{ \bfz_i \wedge 1}{2^{|i|+1}} < \delta_n \right\}\nn\\
&\quad \subset \left\{ \bfz : \bigvee_{|i|<M} \bfTinf_i(\bfz)> \gamma,   \bigvee_{|i|<n} z_i  < 2^n\delta_n \right\}\nn\\
&\quad \subset \bigcup_{|i|<M} \left\{ \bfz : \sum_{l=0}^{\infty} \psi_l z_{i-l} > \gamma,   \bigvee_{|i|<n} z_i  < 2^n\delta_n \right\}\nn\\
&\quad \subset \bigcup_{|i|<M}\left\{ \bfz :  (\sum_{l=0}^{\infty} \psi_l)(\bigvee_{|i|<n} z_i) + \sum_{l > i+n} \psi_l z_{i-l}> \gamma,   \bigvee_{|i|<n} z_i  < 2^n\delta_n \right\}\nn\\
&\quad \subset \bigcup_{|i|<M}\left\{ \bfz :   \sum_{l > i+n} \psi_l z_{i-l}> \gamma - S2^n\delta_n \right\}\nn\\
&\quad \subset \bigcup_{|i|<M}\left\{ \bfz :   \sum_{l > i+n} \psi_l z_{i-l}> \gamma/2 \right\}\nn
\end{align}
for large enough $n$. So we have that 
\begin{align}
&\lim_{n \to \infty} \limsup_{t \to \infty} \mutz (\{ \bfz : \dZ(\bfT^{\infty}(\bfz),\zero)>\gamma,  \dZ(\bfz,\zero) < \delta_n\}) \nn \\
&\quad \leq  \sum_{|i|<M}\lim_{n \to \infty} \limsup_{t \to \infty} \mutz  \left( \left\{ \bfz :   \sum_{l > i+n} \psi_l z_{i-l}> \gamma/2 \right\} \right) \nn \\
&\quad \leq  \sum_{|i|<M}\lim_{n \to \infty} \limsup_{t \to \infty} tP \left[\left\{ \bfz :   \sum_{l > i+n} \psi_l Z_{i-l}> b(t)\gamma/2 \right\} \right] \nn \\
&\quad = 0. \nn
\end{align}
The last line follows from (C4).
\end{proof}

\begin{lemma}\label{lem:d(tm,tinfty)>e}
For any $\beta > 0$, 
\begin{align*}
\lim_{m \to \infty} \limsup_{t \to \infty} \mutz (\{ \bfz : \dZ(\bfT^{\infty}(\bfz),\bfT^{m}(\bfz))>\beta\}) = 0
\end{align*}
\end{lemma}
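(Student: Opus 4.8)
The plan is to follow the template of the proof of Lemma~\ref{lem:pseudocontinuity}: localize the metric to finitely many coordinates and then dominate each coordinate's contribution by a truncation-tail event governed by (C4). Write $r_k(\bfz)=\sum_{j>m}\psi_j z_{k-j}\ge 0$ for the $k$-th coordinate of $\bfT^{\infty}(\bfz)-\bfT^{m}(\bfz)$, so that, $\mutz$-almost surely (by (C3)),
\begin{align*}
\dZ(\bfT^{\infty}(\bfz),\bfT^{m}(\bfz)) = \sum_{k=-\infty}^{\infty}\frac{r_k(\bfz)\wedge 1}{2^{|k|+1}}.
\end{align*}
Given $\beta>0$, fix $M=M(\beta)$ with $\sum_{|k|\ge M}2^{-(|k|+1)}<\beta/2$. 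On the event $\{\dZ(\bfT^{\infty}(\bfz),\bfT^{m}(\bfz))>\beta\}$ the tail beyond $M$ contributes less than $\beta/2$, so $\sum_{|k|<M}(r_k(\bfz)\wedge 1)2^{-(|k|+1)}>\beta/2$; since each summand is at most $r_k(\bfz)/2$, this forces $\sum_{|k|<M}r_k(\bfz)>\beta$ and hence $r_k(\bfz)>\beta/(2M-1)$ for at least one index with $|k|<M$. This gives the inclusion
\begin{align*}
\{\bfz:\dZ(\bfT^{\infty}(\bfz),\bfT^{m}(\bfz))>\beta\}\subset\bigcup_{|k|<M}\Bigl\{\bfz:\sum_{j>m}\psi_j z_{k-j}>\tfrac{\beta}{2M-1}\Bigr\}.
\end{align*}

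Next I would apply finite subadditivity of the measure $\mutz$ together with the identity $\mutz(\{\bfz:\sum_{j>m}\psi_j z_{k-j}>x\})=tP[\sum_{j>m}\psi_j Z_{k-j}>b(t)x]$ to obtain
\begin{align*}
\mutz(\{\bfz:\dZ(\bfT^{\infty}(\bfz),\bfT^{m}(\bfz))>\beta\})\le\sum_{|k|<M}tP\Bigl[\sum_{j>m}\psi_j Z_{k-j}>b(t)\tfrac{\beta}{2M-1}\Bigr].
\end{align*}
Because $M$ depends only on $\beta$ and not on $m$ or $t$, the sum on the right has a fixed finite number of terms; taking $\limsup_{t\to\infty}$ and then $\lim_{m\to\infty}$ inside the sum, each term vanishes by (C4) applied with $N=m$ and $x=\beta/(2M-1)$, which proves the lemma.

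I expect no serious obstacle here: the real content is entirely supplied by (C4), and the argument is bookkeeping with the metric $\dZ$. The two points that need a little care are (i) that $\bfT^{\infty}$ is only $P$-almost everywhere defined, so all the set inclusions and the final bound are to be read modulo a $\mutz$-null (equivalently $P$-null) set, which is harmless since (C3) makes $\bfT^{\infty}(\bfZ)$ converge almost surely; and (ii) keeping the coordinate cutoff $M$ fixed, independent of $m$ and $t$, so that the finite union bound can legitimately be interchanged with the iterated $\limsup_{t}$, $\lim_{m}$.
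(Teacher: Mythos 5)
Your proof is correct and follows essentially the same route as the paper's: truncate the metric $\dZ$ to the finitely many coordinates $|k|<M$ using the geometric weights, reduce to a finite union of single-coordinate tail events $\{\sum_{j>m}\psi_j z_{k-j}>c\}$, and kill each term with (C4) after interchanging the finite sum with $\limsup_t$ and $\lim_m$. The only differences are cosmetic (your pigeonhole constant $\beta/(2M-1)$ versus the paper's direct maximum bound), and your remark about $\bfT^{\infty}$ being defined only $\mutz$-a.e.\ is a fair point of care that the paper leaves implicit.
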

\begin{proof}
Fix $\beta > 0$ and let $M>0$ be such that $2\sum_{|i| \geq M} 2^{-(|i|+1)} < \beta/2$. Then  we have
\begin{align}
&\{ \bfz : \dZ(\bfT^{\infty}(\bfz),\bfT^{m}(\bfz))>\beta\} \nn\\
&\subset \left\{ \bfz :  \sum_{|i|<M} \frac{ |\bfTinf(\bfz) - \bfTm(\bfz)|_i \wedge 1}{2^{|i|+1}} > \beta/2 \right\} \nn \\
&\subset \left\{ \bfz :  \sum_{|i|<M} \frac{ \sum\limits_{l>m+1} \psi_l z_{i-l} \wedge 1}{2^{|i|+1}} > \beta/2 \right\} \nn \\
&\subset \left\{ \bfz :  \bigvee_{|i|<M}  \sum\limits_{l>m+1} \psi_l z_{i-l}  > \beta \right\} \nn \\
& \subset \bigcup_{|i|<M} \left\{ \bfz :  \sum\limits_{l>m+1} \psi_l z_{i-l}  > \beta \right\}. \nn 
\end{align}
As in Lemma \ref{lem:pseudocontinuity}, we have
\begin{align*}
&\lim_{m \to \infty} \limsup_{t \to \infty} \mutz (\{ \bfz : \dZ(\bfT^{\infty}(\bfz),\bfT^{m}(\bfz))>\beta\}) \nn \\
&\quad \leq \sum_{|i|<M} tP \left[\left\{ \bfz :  \sum\limits_{l>m+1} \psi_l Z_{i-l}  > b(t) \beta \right\} \right], \nn 
\end{align*}
which is 0 by (C4).
\end{proof}

\begin{proof}[Proof of Theorem \ref{tinftyconvergence}]
By Theorem \ref{portthm}, it is enough to show that for any uniformly continuous $f \in \scrC(\bfO_0)$, we have that $\int f \; \ud \mutj \circ (\bfT^{\infty} )^{-1} \to \int f \; \ud \muj \circ (\bfT^{\infty} )^{-1}$. Fix any such $f$ and set $\mathbb{F} = \{ \mathbf{z} \in \RZ: f(\mathbf{z}) >0\}$. Since $f \in \scrC(\bfO_0)$, we may assume that $\dZ(\mathbb{F},\zero) > \gamma > 0$ and $\sup_{\bfz \in \RZ} f(\bfz) = 1$. Let $\omega_f(\cdot)$ be the modulus of continuity of $f$. 

Fix $\vep > 0$.  By lemma \ref{lem:pseudocontinuity} we can find $\mathbb{G}$ such that $\mathbb{G} = \{ \bfz \in \RZ: \dZ(\bfz,\zero) > \delta\} $ for some $\delta > 0$, $\muz(\partial \, \bfG) = 0$ and $\lim_{t \to \infty} \mutz(\mathbb{F} \setminus \bfTinf(\mathbb{G})) = \muz(\mathbb{F} \setminus \bfTinf(\mathbb{G})) < \vep$.
Then
\begin{align}
&\left|\int f \; \ud \mutz \circ (\bfT^{\infty} )^{-1} - \int f \; \ud \muz \circ (\bfT^{\infty} )^{-1}\right| \nn \\
&\quad = \left|\int_{\bfTinf(\bfz) \in \bfF} f\circ \bfT^{\infty}(\bfz) \;  \mutz (\ud\bfz) - \int_{\bfTinf(\bfz) \in \bfF} f \circ \bfT^{\infty}(\bfz)  \;   \muz (\ud \bfz) \right| \nn \\
&\quad \leq \left|\int_{\mathbb{G}} f\circ \bfT^{\infty}(\bfz) \;  \mutz (\ud\bfz) - \int_{\bf{G}} f \circ \bfT^{\infty}(\bfz)  \;   \muz (\ud \bfz) \right| \nn \\
&\qquad  \qquad + \mutz(\bfF \setminus \bfTinf(\mathbb{G})) + \muz(\bfF \setminus \bfTinf(\mathbb{G})) . \nn
\end{align}
Since the last two terms are less than $\vep$ for large enough $t$, it suffices to show that the first term in the last line above goes to 0. 
\begin{align}
&\left|\int_{\mathbb{G}} f\circ \bfT^{\infty}(\bfz) \;  \mutz (\ud\bfz) - \int_{\bf{G}} f \circ \bfT^{\infty}(\bfz)  \;   \muz (\ud \bfz) \right| \nn \\
&\quad \leq \left|\int_{\mathbb{G}} f\circ \bfT^{\infty}(\bfz) \;  \mutz (\ud\bfz) - \int_{\bf{G}} f \circ \bfTm(\bfz)  \;   \mutz (\ud \bfz) \right| \nn \\
&\qquad + \left|\int_{\mathbb{G}} f\circ \bfTm(\bfz) \;  \mutz (\ud\bfz) - \int_{\bf{G}} f \circ \bfTm(\bfz)  \;   \muz (\ud \bfz) \right| \nn \\
&\qquad + \left|\int_{\mathbb{G}} f\circ \bfTm(\bfz) \;  \muz (\ud\bfz) - \int_{\bf{G}} f \circ \bfT^{\infty}(\bfz)  \;   \muz (\ud \bfz) \right| \nn \\
&\quad = I + II + III. \nn
\end{align}
We deal with $I, II$ and $III$ separately. 

Observe that 
\begin{align}
I &\leq \int_{\mathbb{G}} \left| f\circ \bfT^{\infty}(\bfz) -  f \circ \bfTm(\bfz) \right|  \mathbbm{1} \{  \dZ(\bfT^{\infty}(\bfz),\bfT^{m}(\bfz)) \leq \beta\} \;   \mutz (\ud \bfz) \nn \\
&\qquad+ \int_{\mathbb{G}} \left| f\circ \bfT^{\infty}(\bfz) -  f \circ \bfTm(\bfz) \right|  \mathbbm{1} \{  \dZ(\bfT^{\infty}(\bfz),\bfT^{m}(\bfz)) > \beta\} \;   \mutz (\ud \bfz) \nn \\
&\leq \omega_f(\beta) \mutz(\bfG) + 2\mutz (\{ \bfz : \dZ(\bfT^{\infty}(\bfz),\bfT^{m}(\bfz))>\beta\}) .\nn 
\end{align}
The first term above goes to 0 as $\beta \to \infty$ as $\mutz(\bfG)$ is finite for all large $t$ while the second term goes to 0 by Lemma \ref{lem:d(tm,tinfty)>e} by first letting $t \to \infty$ and then letting $m \to \infty$.

For any fixed $m$, $f \circ \bfTm$ is continuous on $\bfO_0$ and hence on $\bfG$ and so by part (ii) of Theorem \ref{portthm} and using Theorem \ref{tmconvergence} for $j=0$, we have that $II$ goes to 0 as $t \to \infty$ for every $m$.

To deal with $III$, first observe that for any $\bfz \in \bfC_{=1}$, $\lim_{m \to \infty} \bfT^m(\bfz) = \bfTinf(\bfz)$. To see this let $\bfz = \lambda \mathbf{e}_i$. Then
\begin{align*}
\dZ( \bfTm(\bfz), \bfT^{\infty}(\bfz) ) \leq \lambda \sum_{l=m+1}^{\infty} \psi _l,
\end{align*}
which goes to 0 as $m \to \infty$ as $\sum_{j=0}^{\infty} \psi_j  < \infty$ by (C1). Since $\muz$ is finite on $\bfG$ and concentrates on $\bfC_{=1}$ and $f$ is continuous and bounded, we have, by dominated convergence, that $III$ goes to 0 as $m \to \infty$.
\end{proof}
\begin{remark}
The entire exercise in this paper could have been carried out in modestly more generality by assuming that the i.i.d.~ sequence $(Z_i, i\in \bbZ)$ were real-valued and instead of \eqref{iidseqdef1} we assumed that $|Z_0|$ was regularly varying with tail index $\alpha>0$ and 
\begin{align*}
\lim_{t \to  \infty}\frac{ P[Z_0 > t]}{P[|Z_0| >t]}= p \text{ and } \lim_{t \to  \infty}\frac{ P[Z_0 <t]}{P[|Z_0| >t]}= 1- p.
\end{align*}
\end{remark}

\bibliographystyle{amsplain}
\bibliography{bibfile}
\end{document}